\title%[]
{On the tails of the limiting Quicksort distribution}
\date{28 August, 2015; minor revision 26 September, 2015}
\author{Svante Janson}
\thanks{Partly supported by the Knut and Alice Wallenberg Foundation}
\address{Department of Mathematics, Uppsala University, PO Box 480,
SE-751~06 Uppsala, Sweden}
\email{svante.janson@math.uu.se}
\newcommand\urladdrx[1]{{\urladdr{\def~{{\tiny$\sim$}}#1}}}
\subjclass[2010]{} 
\numberwithin{equation}{section}
\renewcommand\le{\leqslant}
\renewcommand\ge{\geqslant}
\theoremstyle{plain}% default
\newtheorem{theorem}{Theorem}[section]
\newtheorem{lemma}[theorem]{Lemma}
\theoremstyle{definition}
\newtheorem{remark}[theorem]{Remark}
\newtheorem*{ack}{Acknowledgement}
\theoremstyle{remark}
\newenvironment{romenumerate}[1][0pt]{% optional argument changes indentation
\addtolength{\leftmargini}{#1}\begin{enumerate}% gives (i), (ii) etc.
 }{\end{enumerate}}
\newcounter{oldenumi}
\newcounter{thmenumerate}
\newenvironment{thmenumerate}
{\setcounter{thmenumerate}{0}%
 \def\item{\par% \ifnum\thethmenumerate=0\else\par\fi %\noindent\fi
 \refstepcounter{thmenumerate}\textup{(\roman{thmenumerate})\enspace}}
}
{}
\newcounter{xenumerate}   %no left indentation; thus wider lines
\newcommand{\refL}[1]{Lemma~\ref{#1}}
\newcommand{\refS}[1]{Section~\ref{#1}}
\newcommand\marginal[1]{\marginpar[\raggedleft\tiny #1]{\raggedright\tiny#1}}
\newcommand\REM[1]{{\raggedright\texttt{[#1]}\par\marginal{XXX}}}
\xdef\klockan{\the\count1.0\the\count255}
\xdef\klockan{\the\count1.\the\count255}\fi
\newcommand\set[1]{\ensuremath{\{#1\}}}
\newcommand\bigpar[1]{\bigl(#1\bigr)}
\newcommand\Bigpar[1]{\Bigl(#1\Bigr)}
\newcommand\xcpar[1]{\{#1\}}
\def\rompar(#1){\textup(#1\textup)}    % usage: \rompar(...)
\def\xexp(#1){e^{#1}}
\newcommand\ceil[1]{\lceil#1\rceil}
\newcommand\ntoo{\ensuremath{{n\to\infty}}}
\newcommand\xtoo{\ensuremath{{x\to\infty}}}
\newcommand\punkt{.\spacefactor=1000}    % om problem!
\newcommand\eg{e.g\punkt}
\newcommand\eqd{\overset{\mathrm{d}}{=}}
\newcounter{CC}
\newcounter{cc}
\newcommand{\cc}{\stepcounter{cc}\ccx} %new constant c_i
\newcommand{\ccx}{c_{\arabic{cc}}}     %repeats the last c_i
\newcommand{\ccdef}[1]{\xdef#1{\ccx}}     %defines #1 as the last c_i
\newcommand\E{\operatorname{\mathbb E{}}}
\renewcommand\P{\operatorname{\mathbb P{}}}
\newcommand\Var{\operatorname{Var}}
\newcommand\gd{\delta}
\newcommand\gam{\gamma}
\newcommand\kk{\kappa}
\newcommand\eps{\varepsilon}
\renewcommand\phi{\xxx}  %% WARNING
\newcommand\qw{^{-1}}
\newcommand\qqw{^{-1/2}}
\newcommand\intoi{\int_0^1}
\newcommand\intoooo{\int_{-\infty}^\infty}
\newcommand\oi{[0,1]}
\newcommand\dd{\,\mathrm{d}}
\newcommand\U{\mathsf{U}}
\newcommand\lniiw{\frac{1}{\ln2}}
\begin{document}

\begin{abstract} 
We give asymptotics for the left and right tails of the limiting Quicksort
distribution. The results agree with, but are less precise than, earlier
non-rigorous results by Knessl and Spankowski.
\end{abstract}

\maketitle

\section{Introduction}\label{S:intro}

Let $X_n$ be the number of comparisons used by the algorithm Quicksort 
when
sorting $n$ distinct numbers, initially in a uniformly random order.
Equivalently, $X_n$ is the internal pathlength in a random binary search
tree with $n$ nodes. (See \eg{} \citet[Sections 5.2.2 and 6.2.2]{KnuthIII} 
or 
\citet[Chapter 8 and Section 1.4.1]{Drmota} 
for a description of the algorithm and 
of binary search trees.)
It follows that $X_n$
satisfies the distributional recurrence relation
\begin{equation}\label{Xn}
X_n \eqd X_{U_n - 1} + X^*_{n - U_n} + n - 1,\qquad n \geq 1,  
\end{equation}
where~$\eqd$ denotes equality in distribution, and, on the right,
$U_n$ is distributed uniformly on the set $\{1, \ldots, n\}$,
$X_j^* \eqd X_j$, $X_0=0$,
and $U_n, X_0, \dots,\allowbreak X_{n - 1},\allowbreak X^*_0,\allowbreak
\dots, X^*_{n - 1}$ 
are all independent.
(Thus, \eqref{Xn} can be regarded as a definition of $X_n$.)

It is well-known, and easy to show from \eqref{Xn}, 
that 
\begin{equation}
  \E X_n=2(n+1)H_n-4n\sim 2n\ln n,  
\end{equation}
where $H_n:=\sum_{k=1}^n k\qw$ is the $n$:th harmonic number.
Moreover, it was proved by 
R\'{e}gnier~\cite{Reg} 
and 
R\"{o}sler~\cite{Roesler}, using different methods,
that the normalized variables
\begin{equation}%\label{normalized}
Z_n := \frac{X_n - \E X_n}n
\end{equation}
converge in distribution to some limiting random variable $Z$, as \ntoo.

There is no simple description of the distribution of $Z$, but various
results have been shown by several different authors. For example, 
$Z$ has an everywhere finite moment generating function, and thus all
moments are finite 
\cite{Roesler}, 
with $\E Z=0$ and $\Var Z=7-\frac{2}3\pi^2$;
furthermore, $Z$ has a
density which is infinitely differentiable \cite{TanH,SJ131}.
Moreover, the recurrence relation \eqref{Xn} yields in the limit
a distributional identity, 
which can be written as
\begin{equation}\label{recZ}
  Z\eqd UZ'+(1-U)Z''+g(U),
\end{equation}
where $U$, $Z'$ and $Z''$ are independent, $U\sim \U(0,1)$ is uniform, 
$Z',Z''\eqd Z$, and $g$ is the deterministic function
\begin{equation}
  \label{g}
g(u):=2u\ln u+2(1-u)\ln(1-u)+1.
\end{equation}
Furthermore, \citet{Roesler} showed that \eqref{recZ} together with $\E Z=0$
and $\Var Z<\infty$
determines the distribution of $Z$ uniquely; see further \cite{SJ134}.
The identity \eqref{recZ} is the basis of much of the study of $Z$,
including the present work.

In the present paper we study the asymptotics of the tail probabilities
$\P(Z\le -x)$ and $\P(Z\ge x)$ as \xtoo.
Using non-rigorous methods from applied mathematics (assuming an as yet 
unverified regularity hypothesis),
\citet{KnSz} found very precise asymptotics of both the left tail and the right
tail. Their result for the left tail is that, as \xtoo,
with $\gam=(2-\frac{1}{\ln2})\qw$,
\begin{align}\label{knsz-}
  \P(Z\le -x) =(\cc+o(1))\ccdef\cco\exp\bigpar{-\cc e^{\gam x}}
\ccdef\ccz
=\exp\bigpar{-e^{\gam x +\cc+o(1)}},
\end{align}
where $\cco,\ccz,\ccx$ are some constants ($\cco$ is explicit in
\cite{KnSz}, but not $\ccz$).
For the right tail, they give a more complicated expression, which by
ignoring higher order terms implies, for example,
\begin{align}\label{knsz+}
  \P(Z\ge x)
=\exp\bigpar{-x\ln x-x\ln\ln x+(1+\ln 2)x+o(x)}.
\end{align}

It has been a challenge to justify these asymptotics rigorously, and
so far very little progress has been made. 
Some rigorous upper bounds were given by
\citet{SJ138}, in particular
\begin{align}\label{fj+}
  \P(Z\ge x) \le \exp\bigpar{-x\ln x +(1+\ln2)x},
\qquad x\ge 303,
\end{align}
with the same leading term (in the exponent) as \eqref{knsz+},
and for the left tail
\begin{align}
  \label{fj-}
\P(Z\le -x) \le \exp(-x^2/5),
\qquad x\ge0,
\end{align}
which is much weaker than \eqref{knsz-}.

Also the present paper falls  short of the (non-rigorous) asymptotics
\eqref{knsz-}--\eqref{knsz+} from \cite{KnSz},
but we show, by simple methods, the following results, which at least show
that the leading terms in the top exponents in \eqref{knsz-}--\eqref{knsz+} are
correct. 

\begin{theorem}
  \begin{thmenumerate}
  \item 
Let 
$\gam:=(2-\lniiw)\qw$.
  As $\xtoo$,
  \begin{equation}\label{sj-}
\exp\bigpar{-e^{\gam x+\ln\ln x+O(1)}}
\le 
	\P(Z\le -x) 
\le \exp\bigpar{-e^{\gam x+O(1)}}
  \end{equation}
\item 	
  As $\xtoo$,
  \begin{equation}\label{sj+}
\exp\bigpar{-x\ln x-x\ln\ln x+O(x)}
\le 
	\P(Z\ge x) 
\le \exp\bigpar{-x\ln x+O(x)}.
  \end{equation}
  \end{thmenumerate}
\end{theorem}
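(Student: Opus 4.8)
\emph{Proof proposal.}

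The plan is to work with the Laplace transforms $\psi(\gl):=\E e^{\gl Z}$ and $\phi(\gl):=\E e^{-\gl Z}$, both finite for every $\gl\ge0$ because $Z$ has an everywhere finite \mgf. Taking $\E e^{\pm\gl(\cdot)}$ in \eqref{recZ} and using (conditional) independence gives the functional equations $\psi(\gl)=\intoi e^{\gl g(u)}\psi(\gl u)\psi(\gl(1-u))\dd u$ and $\phi(\gl)=\intoi e^{-\gl g(u)}\phi(\gl u)\phi(\gl(1-u))\dd u$. I will also iterate \eqref{recZ}: along a single branch, for each $k$,
\[
  Z\eqd\Bigpar{\prod_{i=1}^kU_i}Z_k
       +\sum_{j=1}^k\Bigpar{\prod_{i=1}^{j-1}U_i}\bigsqpar{(1-U_j)\hat Z_j+g(U_j)},
\]
with $U_1,\dots,U_k,Z_k,\hat Z_1,\dots,\hat Z_k$ independent, $U_i\sim\U(0,1)$, $Z_k,\hat Z_j\eqd Z$; and over a complete binary tree of depth $k$, $Z$ equals $\sum_v w_vg(U_v)+\sum_{v'}w_{v'}Z_{v'}$ in distribution, where $v$ runs over the $2^k-1$ internal nodes and $v'$ over the $2^k$ leaves, the $U_v$ are i.i.d.\ $\U(0,1)$, the $Z_{v'}$ i.i.d.\ copies of $Z$, and the weight $w_v$ of a node at depth $j$ (a product of $j$ independent factors, each some $U_w$ or $1-U_w$) satisfies $(\tfrac12-\eta)^j\le w_v\le(\tfrac12+\eta)^j$ once all the $U_w$ lie in $[\tfrac12-\eta,\tfrac12+\eta]$.

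\emph{Right tail.} The upper bound in \eqref{sj+} is weaker than \eqref{fj+} and hence immediate. For the lower bound, condition on $E:=\set{U_1,\dots,U_k\in[1-\gd,1]}$, of probability $\gd^k$. Since $g$ increases on $[\tfrac12,1]$ with $g(1)=1$, on $E$ the ``main'' term $\sum_{j=1}^k(\prod_{i<j}U_i)g(U_j)$ is at least $g(1-\gd)\frac{1-(1-\gd)^k}{\gd}=\bigpar{1-O(\gd\abs{\ln\gd})}\frac{1-(1-\gd)^k}{\gd}$, while the ``noise'' term $(\prod_iU_i)Z_k+\sum_j(\prod_{i<j}U_i)(1-U_j)\hat Z_j$ has, conditionally on the $U_i$, mean $0$ and variance $\le\Var Z\,(1+k\gd^2)$, hence is $\ge-2\sqrt{\Var Z}$ with conditional probability $\ge\tfrac12$. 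Taking $\gd:=1/(x\ln x)$ and $k=x+O(x/\ln x)$ just large enough that the main term exceeds $x+2\sqrt{\Var Z}$ gives $\P(Z\ge x)\ge\tfrac12\gd^k=\exp\bigpar{-x\ln x-x\ln\ln x+O(x)}$, since $\ln(1/\gd)=\ln x+\ln\ln x$.

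\emph{Left tail, upper bound.} I claim $\phi(\gl)\le e^{\Phi(\gl)}$ for all $\gl\ge0$, with $\Phi(\gl):=a\gl\ln\gl+b\gl$ for $\gl\ge1$, $\Phi(\gl):=b\gl$ for $\gl<1$, where $a:=1/\gam=2-\lniiw$ and $b$ is a constant chosen below. For $1/\gl\le u\le1-1/\gl$ one has the \emph{exact} identity
\[
  \Phi(\gl u)+\Phi(\gl(1-u))-\Phi(\gl)
  =a\gl\bigpar{u\ln u+(1-u)\ln(1-u)}
  =-\tfrac{a\gl}{2}\bigpar{1-g(u)},
\]
so the integrand in the equation for $\phi$ equals $e^{\Phi(\gl)}\exp\Bigpar{-\gl\bigpar{1-(1-g(u))(1-a/2)}}$; since $1-a/2=\lniiw/2$ and $1-g(u)\le2\ln2$, the exponent is $\le\Phi(\gl)$, with equality only at $u=\tfrac12$ --- this is precisely where $\gam=(2-\lniiw)\qw$ enters. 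A Laplace estimate at $u=\tfrac12$ bounds the integral over $[1/\gl,1-1/\gl]$ by $O(\gl\qqw)e^{\Phi(\gl)}$, and on the two short intervals $u<1/\gl$, $u>1-1/\gl$ the $b$-terms in $\Phi$ cancel and a crude estimate gives a contribution $O(\gl e^{-\gl})e^{\Phi(\gl)}$; hence there is an \emph{absolute} constant $\gl_0$ with $\intoi e^{-\gl g(u)+\Phi(\gl u)+\Phi(\gl(1-u))}\dd u<e^{\Phi(\gl)}$ for all $\gl\ge\gl_0$. Choosing $b$ large enough that $\phi(\gl)\le e^{\Phi(\gl)}$ on the compact set $[0,\gl_0]$ (possible since $\phi$ is smooth, $\phi(0)=1$, $\phi'(0)=-\E Z=0$), the standard bootstrap --- set $\gl^*:=\inf\set{\gl:\phi(\gl)>e^{\Phi(\gl)}}$, insert $\phi(\gl^*u)\le e^{\Phi(\gl^*u)}$ into the functional equation at $\gl^*$, and derive a contradiction from the above integral inequality --- proves the claim. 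Then $\P(Z\le-x)\le\inf_{\gl\ge1}e^{-\gl x}\phi(\gl)\le\inf_{\gl\ge1}e^{-\gl x+a\gl\ln\gl+b\gl}=\exp\bigpar{-c\,e^{\gam x}}$ for some $c>0$, which is the upper bound in \eqref{sj-}.

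\emph{Left tail, lower bound, and the main difficulty.} Condition now on $E':=\set{U_v\in[\tfrac12-\eta,\tfrac12+\eta]\text{ for all }2^k-1\text{ internal }v}$, of probability $(2\eta)^{2^k-1}$. On $E'$, $g(U_v)\le g(\tfrac12)+5\eta^2=1-2\ln2+O(\eta^2)<0$, so the main term $\sum_v w_vg(U_v)$ is at most $\bigpar{g(\tfrac12)+5\eta^2}\sum_{j=0}^{k-1}2^j(\tfrac12-\eta)^j=\bigpar{g(\tfrac12)+5\eta^2}\frac{1-(1-2\eta)^k}{2\eta}$, while the leaf term has conditional variance $\le2^{-k}(1+2\eta)^{2k}\Var Z\to0$, so is $\le1$ with conditional probability $\ge\tfrac12$. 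Choosing $\eta$ of order $x\qww$ and $k=\tfrac{\gam x}{\ln2}+O(1)$ large enough to make this upper bound for $Z$ at most $-x$ --- feasible because $\tfrac{\gam}{\ln2}=\tfrac1{2\ln2-1}$ while the requirement amounts to $\tfrac{1-(1-2\eta)^k}{2\eta}\ge\tfrac{x}{2\ln2-1}+O(1)$, i.e.\ $k\ge\tfrac{x}{2\ln2-1}+O(1)$ --- one obtains $\P(Z\le-x)\ge\tfrac12(2\eta)^{2^k-1}=\exp\bigpar{-e^{\gam x+\ln\ln x+O(1)}}$, using $\ln(1/(2\eta))=2\ln x+O(1)$. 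The hardest step is the left-tail upper bound: one must isolate a self-referential inequality for $\phi$ with \emph{exactly} the constant $a=1/\gam$ (so that $2\ln2\,(1-a/2)=1$) and then run the bootstrap without circularity, which works only because the threshold $\gl_0$ is forced to be independent of the free constant $b$ by the cancellation of the $b$-terms on $u<1/\gl$, $u>1-1/\gl$. The remainder is parameter bookkeeping: $\gd$ of order $1/(x\ln x)$, $\eta$ of order $x\qww$, $k$ of order $x$ are pinned down by requiring the exponents to come out correct up to $O(x)$, resp.\ $O(1)$.
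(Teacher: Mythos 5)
Your proposal is correct, and its overall architecture matches the paper's: lower bounds by constructing an explicit ``balanced/unbalanced splits'' event from \eqref{recZ}, upper bounds by bounding the moment generating function through the functional equation \eqref{psi} and then applying Markov's inequality. The differences are in execution. For the left-tail upper bound your key estimate is essentially the paper's \refL{L-} in different clothing: you use the majorant $a\gl\ln\gl+b\gl$ (with $a=\kk=2-\lniiw$) made piecewise near $\gl=1$, which forces you to split the $u$-integral at $u=1/\gl$ and $1-1/\gl$ and check that the threshold $\gl_0$ does not depend on $b$; the paper instead uses $\kk t\ln t+at+1$ valid for all $t>0$ (since $t\ln t\ge -e\qw$), so no splitting is needed and the same Gaussian/Laplace estimate around $u=\tfrac12$, driven by the identical cancellation $(2-\kk)\ln 2=1$, finishes the bootstrap. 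For the lower bounds you unroll the recursion explicitly (a single branch of length $k\approx x$ with $U_i\ge1-\gd$, $\gd=1/(x\ln x)$, for the right tail; a full binary tree of depth $k\approx x/(2\ln2-1)$ with all $U_v$ within $\eta$ of $\tfrac12$ for the left tail) and control the residual $Z$-terms by Chebyshev; the paper instead iterates one-step inequalities, $\P(Z\ge z+b)\ge c\gd\P(Z\ge z)$ and $\P(Z\le -z-a)\ge 2\eps\P(Z\le -z)^2$, by induction, which avoids any variance computation and is a bit shorter, though your version makes the ``$\Theta(x)$ unbalanced splits vs.\ $2^{\Theta(x)}$ balanced splits'' picture (the paper's Remark 1.4) completely explicit. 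Your choices $\eta\asymp x\qww$ versus the paper's $\eps=x\qqw$ both land on the $\ln\ln x$ term, and delegating the right-tail upper bound to \eqref{fj+} is exactly what the paper's Remark 1.2 sanctions (the paper still proves \refL{L+} for completeness). Minor blemishes only: the noise bound should read $-2\sqrt{\Var Z\,(1+k\gd^2)}$ rather than $-2\sqrt{\Var Z}$ (harmless since $k\gd^2\to0$), and in the bootstrap one should note, as the paper does, that continuity of $\psi$ forces equality at the infimum point so that the strict integral inequality yields the contradiction.
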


We show the lower bounds in Sections \ref{Slowerleft} and \ref{Slowerright},
and the upper bounds in Sections \ref{Supperleft} and \ref{Supperright}.
The lower bounds are proved by direct arguments using the identity
\eqref{recZ}; the upper bounds are proved by the standard method of first
estimating the moment generating function.

\begin{remark}
  The right inequality in \eqref{sj+} follows from the more precise
  \eqref{fj+}, where an explicit value is given for the implicit constant;
we include this part of \eqref{sj+} for completeness. (The proof in
\refS{Supperright} actually yields a better constant than \eqref{fj+} for
large $x$, see \eqref{ql}.)
We expect that, similarly, the implicit constants in the other parts of
\eqref{sj-}--\eqref{sj+} could be replaced by explicit bounds, using more
careful versions of the arguments and estimates below. However, in order to
keep the proofs simple, we have not attempted this.
\end{remark}

\begin{remark}
  We consider only the limiting random variable $Z$, and not $Z_n$ or $X_n$
  for finite $n$. Of course, the results for $Z$ imply corresponding results
  for the tails $\P(Z_n\le -x)$  and $\P(Z_n\ge x)$ for $n$ sufficiently
  large (depending on $x$), but we do not attempt to give any explicit
  results for finite $n$.
For some bounds for finite $n$, see \cite{SJ141} and (for large deviations)
\cite{McDH}.
\end{remark}

\begin{remark}
  Although we do not work with $Z_n$ for finite $n$, the proofs below
of the lower  bounds  can be  interpreted for finite $n$, saying that
we can obtain $Z_n\le -x$ with roughly
 the given probability (for large $n$)
by considering the event that in the first $\Theta(x)$
generations, all splits are close to balanced (with proportions
 $\frac12\pm x\qqw$, say); similarly, to obtain  
$Z_n\ge x$ we let there be one branch of length $\Theta(x)$
where all splits are extremely
unbalanced (with at most a fraction $(x\ln x)\qw$ on the other side).
The fact that we require an exponential number of splits to be extreme for
the lower tail, but only a linear number for the right tail, can be seen as
an explanation of the difference between the two tails, with the left tail
doubly exponential and the right tail roughly exponential.
\end{remark}

%\section{The moment generating function}
\section{Preliminaries}
Note that $g$ in \eqref{g}
is a continuous convex function on $\oi$, with maximum
$g(0)=g(1)=1$ and minimum $g(1/2)=1-2\ln2=-(2\ln2-1)<0$.

Let $\psi(t):=\E e^{tZ}$ be the moment generating function 
of  $Z$.
As said above, 
R\"{o}sler~\cite{Roesler} showed that $\psi(t)$ is finite for every real $t$.
The distributional identity \eqref{recZ} yields, by conditioning on $U$,
the functional equation
\begin{equation}\label{psi}
  \psi(t):=\E e^{tZ}=\intoi \psi(ut)\psi((1-u)t)e^{tg(u)}\dd u.
\end{equation}

We may replace $Z$ by the right-hand side of \eqref{recZ}; hence we may
without loss of generality assume the equality (not just in distribution)
\begin{equation}\label{recZ=}
  Z= UZ'+(1-U)Z''+g(U).
\end{equation}

\section{Left tail, lower bound}\label{Slowerleft}

\begin{proof}[Proof of lower bound in \eqref{sj-}]
  Let $\eps>0$ be so small that $g(\frac12+\eps)<0$, and let
  $a:=-g(\frac12+\eps)>0$. 
For any $z$, on the event 
\xcpar{$Z'\le-z$, $Z''\le -z$, and  $|U-\frac12|\le\eps$}, 
\eqref{recZ=} yields 
\begin{equation}
  Z\le -Uz-(1-U)z+g(U)=-z+g(U)\le -z-a  .
\end{equation}
Hence, for any real $z$,
\begin{equation}
  \P(Z\le -z-a) \ge 2\eps\P(Z\le-z)^2.
\end{equation}
It follows by induction that
\begin{equation}
  \P(Z\le -na) \ge (2\eps)^{2^n-1}\P(Z\le0)^{2^n},
\qquad n\ge0.
\end{equation}
Consequently, using $2\eps\le1$,
$  \P(Z\le -na) \ge (2\eps\P(Z\le0))^{2^n}$, and thus, with
$c:=\ln(2\P(Z\le0))>-\infty$,
\begin{equation}
  \ln\P(Z\le -na)\ge 2^n\bigpar{\ln\eps+c},
\qquad n\ge0.
\end{equation}
If $x>0$, we take $n=\ceil{x/a}$ and obtain
\begin{equation}\label{lukas}
  \ln\P(Z\le -x)\ge 2^{x/a+1}\bigpar{\ln\eps+c}.
\end{equation}
We choose (for large $x$) $\eps=x\qqw$, so, using Taylor's formula,
\begin{equation}
a=-g\bigpar{\tfrac12+\eps}=-g\bigpar{\tfrac12} +O\bigpar{\eps^2}
=2\ln2-1+O\bigpar{x\qw} 
\end{equation}
and thus
\begin{equation}
  a\qw 
=(2\ln2-1)\qw+O\bigpar{x\qw}.
\end{equation}
Consequently, \eqref{lukas} yields
\begin{equation}
  \ln\P(Z\le -x)\ge 2^{x/(2\ln2-1)+O(1)}\bigpar{\ln x\qqw+c}
=-e^{\gam x+O(1)+\ln\ln x}.
\end{equation}
\end{proof}

\section{Right tail, lower bound}\label{Slowerright}

\begin{proof}[Proof of lower bound in \eqref{sj+}]
  Let $0<\gd<\frac12$.
If $0<U\le \gd$, then 
\begin{equation}\label{pyret}
g(U)\ge g(\gd)=1+2\gd\ln\gd+O(\gd)
\ge 1+3\gd\ln\gd , 
\end{equation}
with the last inequality holding
provided $\gd$ is small enough.

Assume that \eqref{pyret} holds, and assume that $Z'\ge0$, $Z''\ge z\ge0$ and 
$U\le\gd$. Then \eqref{recZ=} yields
\begin{equation}
  Z\ge (1-\gd)z+g(\gd)\ge z-\gd z+1-3\gd\ln\gd\qw.
\end{equation}
Consequently,
\begin{equation}
  \P(  Z\ge z+1-\gd z-3\gd\ln\gd\qw)
\ge
\gd\P(Z\ge0)\P(Z\ge z).
\end{equation}

Let $x$ be sufficiently large and choose $\gd=1/(x\ln x)$.
Then, for $0\le z\le x$, 
\begin{equation}
  z+1-\gd z-3\gd\ln\gd\qw
\ge z+1-\frac{1}{\ln x}-3\frac{\ln (x\ln x)}{x\ln x}
\ge z+1-\frac{2}{\ln x},
\end{equation}
provided $x$ is large enough.
Hence, if $b:=1-\frac{2}{\ln x}$ and $c:=\P(Z\ge0)>0$, then for $0\le z\le x$
we have
\begin{equation}
  \P(Z\ge z+b ) \ge c\gd\P(Z\ge z).
\end{equation}
By induction, we find for $0\le n\le x/b+1$,
\begin{equation}
  \P(Z\ge nb)\ge c^n\gd^n \P(Z\ge0)=c^{n+1}\gd^n
> (c\gd)^{n+1}.
\end{equation}
Consequently, taking $n:=\ceil{x/b}$,
\begin{equation}
  \begin{split}
\ln\P(Z\ge x) &\ge 
(n+1)(\ln c+ \ln \gd)	
\ge (x/b+2)(\ln c+\ln \gd)
\\&
=\bigpar{x+O(x/\ln x)}\bigpar{-\ln x-\ln\ln x+O(1)}
\\&
=-x\ln x-x\ln\ln x+O(x).
  \end{split}
\end{equation}
\end{proof}

\section{Left tail, upper bound}\label{Supperleft}

\begin{lemma}\label{L-}
  There exists $a\ge0$ such that for all $t>0$,
with $\kk:=\gam\qw=2-\lniiw$, 
  \begin{equation}\label{6a}
	\psi(-t)< \exp\bigpar{\kk t\ln t+at+1}.
  \end{equation}
\end{lemma}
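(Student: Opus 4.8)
The plan is to bound $\psi(-t)$ by induction on dyadic scales, using the functional equation \eqref{psi}. Write $\psi(-t)=\intoi \psi(-ut)\psi(-(1-u)t)e^{-tg(u)}\dd u$. The key observation is that $-g(u)\le -g(1/2)=2\ln2-1$ for all $u$, but more usefully $-tg(u)$ is controlled because $g(u)\ge g(1/2)$ and $g$ is convex with $g(0)=g(1)=1$; in fact $-g(u)=1-2\ln2-2u\ln u-2(1-u)\ln(1-u)+\dots$, so the contribution of $e^{-tg(u)}$ is at most $e^{(2\ln2-1)t}$ on the whole interval, and decays near the endpoints. The strategy is to establish a bound of the form $\psi(-t)\le \exp(\kk t\ln t+at+1)$ for $t$ in a bounded range (say $t\le t_0$) by boundedness of $\psi$ on compacta (R\"osler), choosing $a$ large enough, and then bootstrap to all $t>0$: assuming the bound holds for all arguments $\le t$, show it holds at $2t$ (or at $t$ given it holds below $t$).

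The core computation I would carry out: plug the inductive hypothesis into \eqref{psi}. Each factor $\psi(-ut)\psi(-(1-u)t)$ is at most $\exp\bigpar{\kk (ut)\ln(ut)+\kk((1-u)t)\ln((1-u)t) + at + 2}$. Now $\kk ut\ln(ut)+\kk(1-u)t\ln((1-u)t)=\kk t\ln t + \kk t\bigpar{u\ln u+(1-u)\ln(1-u)}$. So the integrand is bounded by
\begin{equation*}
\exp\Bigpar{\kk t\ln t + at + 2 + \kk t\bigpar{u\ln u+(1-u)\ln(1-u)} - t g(u)}.
\end{equation*}
The crucial point is the exponent in $u$: using \eqref{g}, $-tg(u) = -t\bigpar{2u\ln u+2(1-u)\ln(1-u)+1} = -t - 2t\bigpar{u\ln u+(1-u)\ln(1-u)}$. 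Adding, the $u$-dependent part becomes $(\kk-2) t\bigpar{u\ln u+(1-u)\ln(1-u)}$. Since $\kk = 2-\lniiw < 2$, we have $\kk - 2 = -\lniiw < 0$, and since $u\ln u+(1-u)\ln(1-u)\le 0$ with minimum $-\ln2$ at $u=1/2$, the quantity $(\kk-2)t\bigpar{u\ln u+(1-u)\ln(1-u)}$ is \emph{maximized} at $u=1/2$, where it equals $(\kk-2)t(-\ln2) = \lniiw\cdot\ln2\cdot t = t$. Hence the $u$-dependent part plus $-t$ is at most $t - t = 0$. This is precisely where the value $\kk = 2-\lniiw$ is forced: it is the exponent for which the worst case $u=1/2$ exactly cancels the $-t$ from $g$.

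Therefore the integrand is at most $\exp\bigpar{\kk t\ln t + at + 2}$, and integrating over $u\in\oi$ gives $\psi(-t)\le \exp\bigpar{\kk t\ln t + at + 2}$ — but this has $+2$ where we want $+1$. To fix the constant I would use the decay of the integrand away from $u=1/2$: the function $h(u) := t + (\kk-2)t\bigpar{u\ln u+(1-u)\ln(1-u)} \le 0$ is strictly negative except at $u=1/2$, and near $u=1/2$ it behaves like $-c\,t(u-\tfrac12)^2$ for a constant $c>0$ (from the Taylor expansion of $u\ln u+(1-u)\ln(1-u)$), so $\intoi e^{h(u)}\dd u = O(t^{-1/2})$, which for $t\ge t_1$ is $\le e^{-1}$, absorbing the extra unit. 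For small $t\le t_1$ we fall back on choosing $a$ large enough that the bound holds directly from $\sup_{|t|\le t_1}\psi(-t)<\infty$. Combining the two ranges and taking $a$ to be the larger of the constants produced, the bound $\psi(-t)<\exp\bigpar{\kk t\ln t+at+1}$ holds for all $t>0$.

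The main obstacle I anticipate is not the algebra identifying $\kk$ — that falls out cleanly — but the bookkeeping to get the strict inequality and the clean constant $+1$ uniformly: one must track the $+2$ versus $+1$ discrepancy carefully, verify the Gaussian-type decay estimate $\intoi e^{h(u)}\dd u=O(t^{-1/2})$ with an explicit enough threshold, and patch the small-$t$ and large-$t$ regimes so that a single $a$ works. A minor subtlety is that the induction is not on integers but needs to propagate from $(0,t]$ to $t$ (or $t$ to $2t$); since $ut,(1-u)t\le t$ but can equal $t$ only on a null set, and $\psi$ is continuous, this is harmless, but it should be phrased as: let $a$ be such that the bound holds on $(0,t_1]$, then show by a continuity/continuation argument (or direct induction on $t\in(t_1, 2t_1], (2t_1,4t_1],\dots$) that it persists for all $t$.
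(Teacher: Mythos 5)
Your proposal is correct and follows essentially the same route as the paper: bootstrap the bound through the functional equation \eqref{psi}, where the choice $\kk=2-\lniiw$ makes the worst case $u=\tfrac12$ cancel exactly, the Gaussian-type decay $\intoi e^{-ct(u-\frac12)^2}\dd u=O(t^{-1/2})$ recovers the constant $+1$ once $t$ exceeds a fixed threshold, small and moderate $t$ are handled by continuity of $\psi$ and the choice of $a$, and the extension to all $t$ is made rigorous by exactly the first-failure/continuation argument the paper uses ($T:=\inf\set{t>0:\text{\eqref{6a} fails}}$). The only blemish is a sign typo: your auxiliary exponent should read $-t+(\kk-2)t\bigpar{u\ln u+(1-u)\ln(1-u)}$ rather than $+t$, as your own preceding computation shows.
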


\begin{proof}
We %choose $b=1$, and 
note that $t\ln t\ge -e\qw$ for $t>0$, and thus
$\kk t\ln t+at+1\ge -\kk e\qw+1> 0$.
Since $\psi(t)$ is continuous and $\psi(0)=1$, there exists $t_1>0$ such
that $\psi(-t)< \exp\bigpar{1-\kk e\qw}$ for $0\le t\le t_1$, and thus
\eqref{6a} holds for all such $t$, and any $a\ge0$.
Next, let $t_2:=\pi e^{2}$. We may choose $a>0$ such that \eqref{6a} holds
for $t\in[t_1,t_2]$.

Before proceeding to larger $t$, define
\begin{equation}
  \label{h}
h(u):=u\ln u+(1-u)\ln(1-u)
\end{equation}
and note that $g(u)=2h(u)+1$ by \eqref{g}.

Now suppose that \eqref{6a} fails for some $t>0$ and let
$T:=\inf\set{t>0:\text{\eqref{6a} fails}}$.
Then $T\ge t_2$, and, by continuity,
  \begin{equation}\label{7a}
	\psi(-T)= \exp\bigpar{\kk T\ln T+aT+1}.
  \end{equation}
Furthermore, if $0<u<1$, then \eqref{6a} holds for $t=uT$ and $t=(1-u)T$,
and thus, recalling \eqref{h},
{\multlinegap=0pt
\begin{multline*}
	\psi(-uT)\psi\bigpar{-(1-u)T}
\\
	\begin{aligned}
&<\exp\bigpar{\kk uT\ln(uT)+\kk(1-u)T\ln((1-u)T)+auT+a(1-u)T+2}
\\&
=\exp\bigpar{\kk T\ln T+\kk \bigpar{u\ln u+(1-u)\ln(1-u)}T+aT+2}.
\\&
=\exp\bigpar{\kk T\ln T+\kk h(u)T+aT+2}.
	\end{aligned}  
\end{multline*}}
Furthermore, $g(u)=1+2h(u)$, and thus we obtain
\begin{equation}\label{winston}
  	\psi(-uT)\psi\bigpar{-(1-u)T}e^{-Tg(u)}
\\
\le\exp\bigpar{\kk T\ln T-((2-\kk) h(u)+1)T+aT+2}.
\end{equation}

By \eqref{h}, $h(u)$ is a convex function  with $h(\frac12)=-\ln2$,
$h'(\frac12)=0$ and $h''(u)=u\qw+(1-u)\qw\ge4$, and thus by Taylor's
formula,
$h(u)\ge -\ln2+2(u-\frac12)^2$.
Furthermore, $2-\kk=1/\ln2$, and thus
\begin{equation}\label{jw}
(2-\kk)h(u)+1\ge \frac{2}{\ln2}(u-\tfrac12)^2 
\ge (u-\tfrac12)^2 .
\end{equation}

Combining \eqref{psi}, \eqref{winston}, and \eqref{jw}, we obtain
\begin{equation}
  \begin{split}
  \psi(-T)
&\le\intoi \exp\Bigpar{\kk T\ln T + aT + 2 -(u-\tfrac12)^2T}\dd u
\\&
< \exp\bigpar{\kk T\ln T + aT + 2}\intoooo e^{ -(u-\frac12)^2T}\dd u	
\\&
= \sqrt{\frac{\pi}{T}} \exp\bigpar{\kk T\ln T + aT + 2}.
  \end{split}
\end{equation}
Since $T\ge t_2=\pi e^{2}$, this yields 
$\psi(-T)< \exp\bigpar{\kk T\ln T + aT + 1}$, which contradicts \eqref{7a}.
This contradiction shows that no such $T$ exists, and thus \eqref{6a} holds
for all $t>0$.
\end{proof}

\begin{proof}[Proof of upper bound in \eqref{sj-}]
For $x\ge0$ and any $t\ge0$, by \refL{L-},
\begin{equation}
  \P(Z\le -x)\le e^{-tx}\E e^{-tZ}=e^{-tx}\psi(-t)
<\exp\bigpar{-tx+\kk t\ln t+at +1}.
\end{equation}
We optimize by taking $t=\exp(\kk\qw(x-a)-1)$ and obtain
\begin{equation}
\ln  \P(Z\le -x)
<t(\kk \ln t+a-x) +1
=-\kk t+1=-e^{\kk\qw x+O(1)},
\end{equation}
which is the upper bound in \eqref{sj-}  because $\kk\qw=\gam$.
\end{proof}

\section{Right tail, upper bound}\label{Supperright}

As said in the introduction, \eqref{fj+} was proved in \cite{SJ138}.
Nevetheless we give for completeness a proof of the upper bound in
\eqref{sj+}, similar to the proof in \refS{Supperleft}. 
(It is also similar to the proof in \cite{SJ138} but simpler, partly
because we do not keep track of all constants and do not try to optimize;
nevertheless, it yields a slight improvement of \eqref{fj+} for large $x$,
see \eqref{ql} below.)

\begin{lemma}
  \label{L+}
There exists $a\ge0$ such that for all $t\ge0$,
\begin{equation}\label{2a}
  \psi(t)\le\exp\bigpar{e^t+at}.
\end{equation}
\end{lemma}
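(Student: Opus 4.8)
The plan is to mimic the proof of \refL{L-}: bound $\psi(t)$ for small $t$ by continuity, pick a starting threshold $t_2$, choose $a$ to make \eqref{2a} hold on the intermediate interval, and then run a continuity/contradiction argument for large $t$ using the functional equation \eqref{psi}. The key point is that on $\oi$ we have $g(u)\le 1$ with equality only at the endpoints, so the "main contribution" to the integral in \eqref{psi} comes from $u$ near $0$ or $1$, where one of the two factors $\psi(ut)$, $\psi((1-u)t)$ has a small argument and is therefore close to $1$.

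First I would note $e^t+at\ge 1>0$ for $t\ge0$, and use continuity of $\psi$ with $\psi(0)=1$ to get \eqref{2a} on some $[0,t_1]$ for any $a\ge0$; then choose $a>0$ large enough that \eqref{2a} also holds on $[t_1,t_2]$ for a suitable fixed $t_2$ to be pinned down. Suppose \eqref{2a} fails somewhere and let $T:=\inf\{t>0:\text{\eqref{2a} fails}\}\ge t_2$, so by continuity $\psi(T)=\exp(e^T+aT)$. For $0<u<1$, \eqref{2a} holds at $uT$ and $(1-u)T$, giving
\begin{equation}
  \psi(uT)\psi\bigpar{(1-u)T}e^{Tg(u)}
  \le \exp\bigpar{e^{uT}+e^{(1-u)T}+aT+Tg(u)}.
\end{equation}
Using $g(u)=2h(u)+1\le 1$ and, more precisely, that $Tg(u)$ is very negative unless $u$ is within $O(1/T)$ of an endpoint, I would split the integral $\int_0^1$ into $[0,1/2]$ and $[1/2,1]$ (symmetric), and on $[0,1/2]$ substitute to see that $e^{uT}\le 1+ e^{T/2}$ there is too crude near $u=1/2$ — instead the right bookkeeping is: for $u\le 1/2$, $e^{(1-u)T}\le e^T$ is the dominant term, $e^{uT}\le e^{T/2}$, and $e^{Tg(u)}=e^{T}e^{2Th(u)}$ with $h(u)\le 0$; so the integrand is at most $\exp(e^T + e^{T/2} + aT + T + 2Th(u))$, and since $h(u)\le -\ln 2 + \tfrac12 \log$-type convexity near $0$ is the wrong direction, I would instead exploit that near $u=0$, $g(u)\to 1$ slowly while $e^{(1-u)T}$ drops from $e^T$; the clean way is to write $e^{uT}+e^{(1-u)T}+Tg(u)\le e^T + (\text{something integrably small})$ by checking that the function $u\mapsto e^{uT}+e^{(1-u)T}+Tg(u)-e^T$ has integral over $\oi$ that is $o(1)$ (indeed $O(T^{-1})$ or smaller) as $\ttoo$. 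Granting that, $\psi(T)\le \exp(e^T+aT)\cdot\bigpar{1+o(1)}<\exp(e^T+aT)$ for $T\ge t_2$ with $t_2$ chosen large enough, contradicting $\psi(T)=\exp(e^T+aT)$.

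The main obstacle is making the bound on $\int_0^1 \exp\bigpar{e^{uT}+e^{(1-u)T}+aT+Tg(u)}\dd u$ genuinely subexponential relative to $\exp(e^T+aT)$: one must show the surplus exponent $e^{uT}+e^{(1-u)T}+Tg(u)-e^T$ is $\le 0$ for $u$ bounded away from $0,1$ and integrates to something small near the endpoints. Near $u=0$ write $v=uT$: then $e^{uT}=e^v$, $e^{(1-u)T}=e^{T}e^{-v}\le e^T - e^T(1-e^{-v})$, and $Tg(u)=T(1+2h(v/T))=T+2T h(v/T)$; since $h(v/T)=\tfrac{v}{T}\ln\tfrac{v}{T}+O(v/T)$, we get $2Th(v/T)=2v\ln(v/T)+O(v)\to-\infty$, which crushes everything, and the $u$-measure of the region where this fails to beat $e^T$ is $O(T^{-1})$ with an extra $e^{-cv}$-type decay making the integral $O(T^{-1})$. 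Away from the endpoints $e^{uT},e^{(1-u)T}\le e^{(1-\eta)T}=o(e^T)$ while $Tg(u)\le T$, so the exponent is $\le e^T - e^T + e^{(1-\eta)T}+T = e^T + o(e^T)$ — actually one wants it $\le e^T$ outright, which holds once $T$ is large since $e^{(1-\eta)T}+T < 0$ is false, so here too one keeps it as a lower-order additive term and absorbs the total $o(e^T)$ into "$< aT$" slack — hence it suffices to have chosen $t_2$ large enough that this $o(e^T)$ plus the $\sqrt{\pi/T}$-type factor is $<0$ in the exponent relative to the strict inequality, exactly as in \refL{L-}. Once Lemma~\ref{L+} is in hand, the right-tail upper bound follows by the standard Chernoff step $\P(Z\ge x)\le e^{-tx}\psi(t)\le\exp(-tx+e^t+at)$ and optimizing at $t=\ln x - \ln\ln x + O(1)$ (more precisely $t$ solving $e^t=x$ up to the $at$ correction), giving $\ln\P(Z\ge x)\le -x\ln x + O(x)$.
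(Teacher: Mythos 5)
Your skeleton is exactly the paper's: continuity at $0$, choose $a$ to cover a fixed interval $[t_1,t_2]$, take $T:=\inf\{t:\text{\eqref{2a} fails}\}$ so $\psi(T)=\exp(e^T+aT)$, feed \eqref{2a} at $uT$ and $(1-u)T$ into \eqref{psi} with $g(u)\le1$, split into an endpoint region and a middle region, and get a contradiction; the concluding Chernoff step is also the standard one. The problem is that the decisive endpoint estimate — which you yourself flag as the main obstacle and then only sketch — is not correct as stated, so the contradiction would not follow from what you wrote. First, for $v=uT$ bounded, the term $2Th(v/T)=2v\ln(v/T)+O(v)$ is only of size $O(v\ln T)$; it cannot ``crush'' the additive $+T$ coming from $Tg(u)\le T$ (the strict negativity of $h$ inside $(0,1)$, crucial in \refL{L-}, plays no role in this lemma). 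Second, the bookkeeping ``$u$-measure $O(T^{-1})$ with an extra $e^{-cv}$-type decay making the integral $O(T^{-1})$'' does not add up: with only an $e^{-cv}$ decay, $c=O(1)$, the endpoint region contributes, relative to $\exp(e^T+aT)$, a factor of order $e^{T}/T$ because the factor $e^{Tg(u)}\le e^{T}$ is never cancelled — far too large to conclude.

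What saves the argument is the much stronger decay you wrote down but did not use: for $v=uT\le1$ one has $e^{(1-u)T}=e^{T}e^{-v}\le e^{T}-\tfrac12 ve^{T}$, so the integrand carries a factor $\exp\bigpar{-\tfrac12 ve^{T}}$, and $\int_0^{1}\exp\bigpar{-\tfrac12 ve^{T}}\dd v\le 2e^{-T}$ cancels the $e^{+T}$ from $e^{Tg(u)}$ exactly, leaving an endpoint contribution of order $T^{-1}\exp\bigpar{e^T+aT}$; this is precisely the paper's computation for $uT\le1$, giving the constant $4e^{e}/T$. For $uT>1$ and $u\le\tfrac12$ one has $e^{uT}+e^{(1-u)T}\le 2e^{T-uT}\le 2e^{-1}e^{T}$, so the exponent falls short of $e^{T}+aT$ by a positive multiple of $e^{T}$ and that contribution is exponentially small relative to $\psi(T)$; your suggestion to ``absorb the total $o(e^T)$ into $aT$ slack'' is neither possible (a quantity that is $o(e^{T})$ can still dwarf $aT$) nor needed. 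With the endpoint estimate repaired in this way and $t_2$ a fixed large constant (the paper takes $t_2=100$, so that $0.7\psi(T)+0.1\psi(T)<\psi(T)$), your proof becomes the paper's proof.
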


Note that \cite[Corollary 4.3]{SJ138} shows the bound $\psi(t)\le\exp(2e^t)$
for $t\ge 
5.02$, which is explicit, but  weaker for large $t$.

\begin{proof}
  Since $\psi(0)=1<e$, it follows by continuity that there exists $t_1>0$
  such that $\psi(t)\le e$ for $t\in[0,t_1]$, and thus \eqref{2a} holds for
  $t\in[0,t_1]$ and any $a\ge0$.

Let $t_2:=100$, and choose $a$ so that \eqref{2a} holds for $t\in[t_1,t_2]$.
Assume that \eqref{2a} fails for some
$t>0$, and 
let
$T:=\inf\set{t>0:\text{\eqref{2a} fails}}$.
Then $T\ge t_2$, and, by continuity,
  \begin{equation}\label{2c}
	\psi(T)= \exp\bigpar{e^T+aT}.
  \end{equation}
Furthermore, if $0<u<1$, then \eqref{2a} holds for $t=uT$ and $t=(1-u)T$,
and thus, using \eqref{psi} and the symmetry $u\leftrightarrow1-u$ there,
and $g(u)\le 1$,
\begin{equation}\label{2b}
  \begin{split}
\psi(T)&
\le2\int_0^{1/2} \exp\Bigpar{e^{uT}+auT+e^{(1-u)T}+a(1-u)T+Tg(u)}\dd u
\\&
\le2\int_0^{1/2} \exp\Bigpar{e^{uT}+e^{T-uT}+aT+T}\dd u.	
  \end{split}
\end{equation}
We consider two cases.

(i) If $uT\le 1$, then $e^{-uT}\le 1-\frac12uT$, and thus
\begin{equation}
  e^{uT}+e^{T-uT}+aT+T
\le e+e^T(1-\tfrac12uT)+(a+1)T.
\end{equation}
Hence, the contribution to \eqref{2b} for $u\le 1/T$ is no more than
\begin{equation}
  \begin{split}
	\label{3a}
2\int_0^{1/T} \exp\Bigpar{e^{T}+(a+1)T&+e-\tfrac12 Te^T u}\dd u
\\
%\begin{aligned}
&<2 \exp\Bigpar{e^{T}+(a+1)T+e}\frac{1}{\frac12 Te^T}
\\&
=\frac{4e^e}{T} \exp\bigpar{e^T+aT}
\le 0.7 \psi(T),
%\end{aligned}	
  \end{split}
\end{equation}
%\end{multline}
by \eqref{2c} and $T\ge t_2=100$, since $4e^e\doteq 60.62$.

(ii)
For $uT>1$ and $u<\frac12$, recalling $T\ge t_2=100$,
\begin{equation}
  \begin{split}
e^{uT}+e^{T-uT}+aT+T
&
\le 2e^{T-uT}+aT+T
\le 2e\qw e^{T}+aT+T
\\&
\le 0.8 e^{T}+T +aT
\le 0.9 e^T+aT
\\&
= e^T+aT-0.1 e^{T}
\le e^T+aT-100.
  \end{split}
\end{equation}
Hence, the contribution to \eqref{2b} for $uT>1$ is less than, 
recalling \eqref{2c},
\begin{equation}\label{4a}
  \exp\bigpar{e^T+aT-100}
=e^{-100}\psi(T) <0.1 \psi(T).
\end{equation}

Using \eqref{3a} and \eqref{4a} in \eqref{2b}, we find
\begin{equation}
  \psi(T)<0.7\psi(T)+0.1\psi(T),
\end{equation}
a contradiction. Hence $T$ cannot exist and \eqref{2a} holds for all $t\ge0$.
\end{proof}

\begin{proof}[Proof of upper bound in \eqref{sj+}]
For $x\ge0$ and any $t\ge0$, by \refL{L+},
\begin{equation}
  \P(Z\ge x)
\le e^{-tx}\E e^{tZ}
=e^{-tx}\psi(t)
\le\exp\bigpar{-tx+e^t+at}.
\end{equation}
We take $t=\ln x$ (assuming $x\ge1$)
%We optimize by taking $t=\ln(x-a)$ (for $x\ge a+1$) 
and obtain
\begin{equation}\label{ql}
  \P(Z\ge x)
%\le\exp\bigpar{-(x-a)\ln(x-a)+x-a}
\le \exp\bigpar{-x\ln x+x+O(\ln x)},
\qquad x\ge1.
\end{equation}
(The optimal choice of $t$ is actually $\ln(x-a)$, but this leads to the
same result up to $o(1)$ in the exponent, 
which is absorbed by the error term $O(\ln x)$.)
\end{proof}

\begin{ack}
  I thank David Belius and Jim Fill for helpful comments.
\end{ack}

\newcommand\AAP{\emph{Adv. Appl. Probab.} }
\newcommand\JAP{\emph{J. Appl. Probab.} }
\newcommand\JAMS{\emph{J. \AMS} }
\newcommand\MAMS{\emph{Memoirs \AMS} }
\newcommand\PAMS{\emph{Proc. \AMS} }
\newcommand\TAMS{\emph{Trans. \AMS} }
\newcommand\AnnMS{\emph{Ann. Math. Statist.} }
\newcommand\AnnPr{\emph{Ann. Probab.} }
\newcommand\CPC{\emph{Combin. Probab. Comput.} }
\newcommand\JMAA{\emph{J. Math. Anal. Appl.} }
\newcommand\RSA{\emph{Random Struct. Alg.} }
\newcommand\ZW{\emph{Z. Wahrsch. Verw. Gebiete} }
\newcommand\DMTCS{\jour{Discr. Math. Theor. Comput. Sci.} }

\newcommand\AMS{Amer. Math. Soc.}
\newcommand\Springer{Springer-Verlag}
\newcommand\Wiley{Wiley}

\newcommand\vol{\textbf}
\newcommand\jour{\emph}
\newcommand\book{\emph}
\newcommand\inbook{\emph}
\def\no#1#2,{\unskip#2, no. #1,} %(typeset after year) 
\newcommand\toappear{\unskip, to appear}

\newcommand\arxiv[1]{\texttt{arXiv:#1}}
\newcommand\arXiv{\arxiv}

\end{document}